 \def\LaTeX{\leavevmode L\raise.42ex
   \hbox{\kern-.3em\size{\sf@size}{0pt}\selectfont A}\kern-.15em\TeX}
\newcommand{\BibTeX}{{\rm B\kern-.05em{\sc
i\kern-.025emb}\kern-.08em\TeX}}
\newtheorem{thm}{Theorem}[section]
\newtheorem{defn}{Definition}
\theoremstyle{defn}
\newtheorem{lem}[thm]{Lemma}
\newtheorem{remark}[thm]{Remark}
\numberwithin{equation}{section}
\begin{document}

\title[A Weak Weyl's Law]{A Weak Weyl's Law on  compact  metric measure  spaces}

\author{Isaac Z. Pesenson}
\address{Department of Mathematics, Temple University,
Philadelphia, PA 19122} \email{pesenson@temple.edu}

\keywords{      }
  \subjclass{ 43A85, 41A17;}

 \begin{abstract}

The well known  Weyl's Law (Weyl's  asymptotic formula) gives an approximation to the number $\mathcal{N}_{\omega}$  of eigenvalues (counted with multiplicities) on a large interval $[0,\>\omega]$ of the Laplace-Beltrami operator  on a compact Riemannian manifold ${\bf M}$.
In this paper we prove a kind of a weak version of the Weyl's law on certain  compact metric measure spaces ${\bf X}$ which are equipped with a self-adjoint non-negative operator $\mathcal{L}$ acting in $L_{2}({\bf X})$.
 Roughly speaking,  we show that if a certain Poincar\'e inequality holds then $\mathcal{N}_{\omega}$ is controlled by the cardinality of an appropriate cover $\mathcal{B}_{\omega^{-1/2}}=\{B(x_{j},\omega^{-1/2})\},\>\>\>x_{j}\in {\bf X},$ of ${\bf X}$ by balls of radius $\omega^{-1/2}$. 
 Moreover, an opposite inequality holds if the heat kernel that corresponds to $\mathcal{L}$ satisfies short time Gaussian estimates.

It is known that  in the  case of  the so-called  strongly local regular with a complete intrinsic metric Dirichlet spaces the Poincar\'e  holds iff the corresponding heat kernel satisfies short time Gaussian estimates. Thus for such spaces one obtains that $\mathcal{N}_{\omega}$ is essentially equivalent to the cardinality of a cover $\mathcal{B}_{\omega^{-1/2}}$.

\end{abstract}

\maketitle

	\section{Introduction}
	
The well known  Weyl's asymptotic formula gives an approximation to the number $\mathcal{N}_{\omega}$  of eigenvalues (counted with multiplicities) on an interval $[0,\>\omega]$ of the Laplace-Beltrami operator  on a compact Riemannian manifold ${\bf M}$. Namely, according to the  Weyl's asymptotic formula one has for large $\omega$
\begin{equation}\label{Weyl-1}
\mathcal{N}_{\omega}\sim C \>Vol(\mathbf{\mathbf{M}})\omega^{n/2},
\end{equation}
where $n=dim \>\mathbf{\mathbf{M}}$ and $C$ depends on ${\bf M}$. 
Let's rewrite it in  the following form
\begin{equation}\label{Weyl-2}
\mathcal{N}_{\omega}\sim C \>Vol(\mathbf{\mathbf{M}})\omega^{n/2}=C\frac{ Vol({\bf M})}{\left(\omega^{-1/2}\right)^{n}},\>\>\>n=dim\> {\bf M}.
\end{equation}	
Since in the case of a Riemannian manifold ${\bf M}$ of dimension $n$ 
all the balls of the same small radius $\epsilon$ have essentially the same volume $\sim \epsilon^{n}$  
the last fraction  can be  interpreted as a  number of balls $
	B(x_{\nu},\omega^{-1/2})$ with centers $\{x_{\nu}\}$ and radius $\omega^{-1/2}$ in an  "optimal" cover of ${\bf M}$.

Alternatively, if $M_{\omega^{-1/2}}$ is the set of centers of balls in such "optimal" cover of  ${\bf M}$ than the above arguments show that there exist two positive constants $C_{1}, \>C_{2}$ which depend of ${\bf M}$ but independent of $\omega>0$ for which
\begin{equation}\label{WWL-00}
C_{1}\>card\left(M_{\omega^{-1/2}}\right)\leq \mathcal{N}_{\omega}\leq C_{2}\>card\left(M_{\omega^{-1/2}}\right).
\end{equation} 
This double inequality is a statement which is weaker than the asymptotic formula and it is what we call  the \textit{ Weak Weyl's Law}. It should be  noted that  interpretation of (\ref{Weyl-1}) as a statement about a cover requires the assumption that all the balls of the same small radius have essentially the same volume (which is clearly not satisfied in general see section \ref{examples}   below).  However, the weaker statement (\ref{WWL-00}) does not require a such  assumption. 
	
\bigskip

 The main goal of the paper is to sketch a direct proof of  inequalities similar to (\ref{WWL-00}) inequality (see Theorem \ref{WWL}) in a setting of  compact metic measure spaces. In the case of compact Riemannian manifolds a similar statement was proved in \cite{Pes19}.	
		
	\bigskip
	
	It should be noted that a complete analog of the classical Weyl's asymptotic formula was recently established in \cite{AHT}, \cite{Z} under very strong assumptions about the curvature of the underlying metric measure space. We prove only a weaker form of the Weyl's asymptotic formula but our assumptions about a space  seems to be much weaker.
	Generally speaking, we 
 will work  in a framework in which the  main objects are:

	(a) a compact metric measure space ${\bf X}$ with  doubling property,

	(b) a non-negative densely defined symmetric  operator $\mathcal{L}$ in the corresponding space $L_{2}({\bf X})$ whose closure is self-adjoint.

	First we show that
there exists a constant $N=N(\mathbf{X})$ and for every sufficiently small $\rho>0$ there exists a set of points $\mathcal{X}_{\rho}=\{x_{j}\},\>x_{j}\in {\bf \mathbf{X}}$ such that
\begin{enumerate}

	\item  Balls $B(x_{j}, \rho/2)$ are disjoint;
	
		\item $\left\{B(x_{j}, \rho)\right\}$ is a cover of $\mathbf{X}$ and its  multiplicity  is not greater than $\leq N$.

\end{enumerate}
	
	We say that that $\mathcal{X}_{\rho}=\{x_{j}\},\>x_{j}\in {\bf \mathbf{X}},\>\rho>0,$  is a metric ($\rho$, N)-lattice if it  satisfies properties (1) and (2).

	In our first main Theorem \ref{lower-estimate} we prove that modulo some techical assumptions \textit{if the Poincar\'e inequality holds in the form   (\ref{Poinc-0}) (see  below)} then  the operator $\mathcal{L}$ has  a discrete spectrum $0\leq\lambda_{0}\leq \lambda_{1}\leq \lambda_{2},...$ which goes to infinity  without any accumulation points. Thus it makes sense to introduce  $\mathcal{N}_{\omega}(\mathcal{L})$  as  a number   of eigenvalues of $\mathcal{L}$ (counted with multiplicities) which are not greater than $\omega>0$. The last part of the Theorem    \ref{lower-estimate}        says that under the same assumptions as in the first part of the Theorem,  there exists a constant 
		$0<\gamma=\gamma({\bf  X})<1$
		 such that for all sufficiently large $\omega>0$ the following inequality  holds
		\begin{equation}\label{Upper}
		\mathcal{N}_{\omega}(\mathcal{L}) \leq \> \inf \>card \left(\mathcal{X}_{\gamma\omega^{-1/2}}\right),
		\end{equation}
		where  $\inf$ is taken over all $(\gamma\omega^{-1/2}, N)$-lattices.

	  The second result is established    under  different assumptions. Namely, in Theorem \ref{WWL-left} we show that \textit{if the heat kernel of the heat semigroup generated by $\mathcal{L}$ satisfies short time Gaussian estimates (see (\ref{Gauss})}) then  there exists   a constant
		$$
		0<c=c({\bf X})<1
		$$
		 such that for all sufficiently large $\omega$ the number $\mathcal{N}_{\omega}(\mathcal{L})$ of eigenvalues of $\mathcal{L}$ in $[0,\>\omega]$ satisfies the following  inequality 
		\begin{equation}\label{Main-left}
	c\>\sup \>card \left(\mathcal{X}_{\omega^{-1/2}}\right)\leq \mathcal{N}_{\omega}(\mathcal{L}),
		\end{equation}
		where  $\sup$ is taken over all $(\omega^{-1/2}, N)$-lattices.  It should be nouted that the proof of this  lower estimate heavily depend on some ideas in \cite{CKP} and, particular, on the first part of their Lemma  3.19 (see Lemma \ref{key} in the present paper). Our section \ref{proof} contains  a slightly modified proof of this lemma.

	   We combine these two statements in Theorem \ref{WWL}.
	In section
\ref{Dirichlet} we remind  a known fact that under some additional assumptions on ${\bf X}$ and $\mathcal{L}$ (i.e. in the setting of a special type of metric measure spaces  the so-called Dirichlet spaces \cite{   A}, \cite{BH},  \cite{      FOT}, \cite{S1}-\cite{ S3}) \textit{existence of the Poincar\'e inequality is equivalent to the "correct" short time Gaussian estimates}(\cite{HS}, \cite{S3}). 	After that in section \ref{examples} we illustrate   our results in the case of   sub-Riemannian   compact homogeneous manifolds.

\section{Metric measure spaces and Poincare inequality}

A compact metric measure space in the sense of Coifman and Weiss \cite{CW1}, \cite{CW2}, is a triple $(\mathbf{X}, d, \mu)$  where $\mathbf{X}$ is a compact metric space  with a metric $d$ and a positive Radon  measure $\mu$ such that the  doubling condition  holds.
Namely,

\bigskip

{\bf Assumption A}.   \textit{There exists a  $D>0,\>$ such that for every open ball $B(x,r)$ with center $x\in \mathbf{X}$ and radius $r>0$, the following inequality holds
\begin{equation}\label{doubling}
0<\left|B(x, 2r)\right|\leq 2^{D}\left|B(x, r)\right|,\>\>\>  
\end{equation}
where $|\cdot| $ is the volume of a ball.}

Our next  assumptions are about "analysis". 

\bigskip

{\bf Assumption B}. \textit{The real space $L_{2}(\mathbf{X}, \mu)$ is equipped with a non-negative symmetric operator  $\mathcal{L}$ defined on a dense domain $\mathcal{D}(\mathcal{L})$:
$$
\left<\mathcal{L}f, g\right>=\left<f, \mathcal{L}g\right>,\>\>\>f,g\in \mathcal{D}(\mathcal{L}).
$$
 We assume that  $\mathcal{D}^{\infty}(\mathcal{L})=\bigcap_{k\in \mathbb{N}} \mathcal{D}(\mathcal{L}^{k})$ is a subalgebra of the space of continuous functions $C({\bf X})$.
}
\bigskip

These assumptions along with compactness of $\bf{X}$ imply that  the following bilinear map is well   defined on $\mathcal{D}^{\infty}(\mathcal{L})\times \mathcal{D}^{\infty}(\mathcal{L})$ 
and maps it into $ L_{1}(\bf{X},\mu)$ 
$$
\Gamma(f,g)=\frac{1}{2}\left(\mathcal{L}(fg)-f\mathcal{L}g-g\mathcal{L}f\right),\>\>\>f,g\in \mathcal{D}^{\infty}(\mathcal{L}).
$$
The following function is a sort of "squared gradient"  of  $f\in   \mathcal{D}^{\infty}(\mathcal{L})$:
$$
\Gamma(f,f)=\frac{1}{2}\left(\mathcal{L}(f^{2})-2f\mathcal{L}f\right)\in L_{1}(\bf{X},\mu).
$$
At the same time, by applying the spectral theorem to the operator $\mathcal{L}$ one can consider the positive square root $\mathcal{L}^{1/2}$.

\bigskip

{\bf Assumption C}. \textit{
There exists a $\>\>\>c>0\>\>\>$ such that for  all $f\in \mathcal{D}^{\infty}(\mathcal{L})$ the following inequality holds 
\begin{equation}\label{grad}
\int_{\mathbf{X}}\Gamma(f,f)(x)d\mu(x)\leq c\left\| \mathcal{L}^{1/2}f\right\|^{2}.
\end{equation}   }

\bigskip

Our main assumption which connects the operator $\mathcal{L}$ and the metric measure space is the Poincar\'e inequality. Namely, we assume the following.

\bigskip

{\bf Assumption D}. \textit{
There exists  constant $C>0$ such that for any $f\in \mathcal{D}^{\infty}(\mathcal{L})=\bigcap_{k\in \mathbb{N}}\mathcal{D}(\mathcal{L}^{k}) $ and any  ball $B(x, \rho),\>\>x\in \mathbf{X},$ of a sufficiently small radius $\rho$ the following {\bf local Poincar\'e inequality } holds
\begin{equation}\label{Poinc-0}
\int_{B(x,\rho)}|f(y)-f_{B}|^{2}d\mu(y)\leq C\rho^{2}\int_{B(x,\rho)}\Gamma(f,f)(y)d\mu(y),
\end{equation}
where 
$$
f_{B}=\frac{1}{|B(x,\rho)|}\int_{B(x,\rho)}f(x)d\mu(x).
$$  }

\begin{remark}
It should be stressed  that for our purposes it is sufficient to request that  the Poincar\'e inequality holds only for functions in $ \mathcal{D}^{\infty}(\mathcal{L})$. It is a little less than the common assumption (see for example \cite{CKP}).
\end{remark}

\section{Paley-Wiener functions}

 Following \cite{Pes88}  -\cite{Pes01} we introduce the subspaces     ${\bf E}_{\omega}(\mathcal{ L}),\>\>\omega>0,$ of  Paley-Wiener  functions in $L_{2}({\bf X})$.
We are going to apply the functional model of 
  the spectral theory \cite{BS} to the non-negative self-adjoint operator $\mathcal{L}$. According to this model there exist a
direct integral of Hilbert spaces $A=\int_{0}^{\infty} A(\lambda )dm (\lambda )$ and a
unitary operator $\mathcal{F}$ from
$L_{2}({\bf X}, \mu)$ onto $A$, which transforms domain of $\mathcal{L}^{k}$ onto
$A_{k}=\{a\in A|\lambda ^{k}a\in A \}$
with norm
$$
\|a(\lambda )\|_{A_{k}}= \left (\int^{\infty}_{0} \lambda ^{2k}
\|a(\lambda )\|^{2}_{A(\lambda )} dm(\lambda ) \right )^{1/2} 
$$
besides $\mathcal{F}(\mathcal{L}^{k} f)=\lambda ^{k} (\mathcal{F}f), $ if
$f$ belongs to the domain of
$\mathcal{L}^{k}$.  As known, $A$ is the set of all $m $-measurable functions
$\lambda \rightarrow a(\lambda )\in
A(\lambda ) $, for which the norm

$$\|a\|_{A}=\left ( \int ^{\infty }_{0}\|a(\lambda )\|^{2}_{A(\lambda )}
dm (\lambda ) \right)^{1/2} $$
is finite.
\begin{defn}
The space of $\omega$-Paley-Wiener  functions $\mathbf{E}_{\omega}(\mathcal{L}),\>\>\omega>0,$ is defined as the set of all functions whose "Fourier transform" $\mathcal{F}f$ has
support in
 $[0 , \omega ] $. 
 \end{defn}

Note that at this point we don't  have any information about  the spectrum of $\mathcal{L}$.

\begin{lem}
The space  $\mathbf{E}_{\omega}(\mathcal{L})$ consists of  all $f\in \mathcal{D}^{\infty}(\mathcal{L})=\bigcap_{j\in \mathbb{N}}\mathcal{D}(\mathcal{L}^{j}) $ for which the following Bernstein inequalities hold
\begin{equation}\label{Bern}
\left \|\mathcal{L}^{k/2}f\right\|\leq \omega^{k}\|f\|,\>\>k\in \mathbb{N}.
\end{equation}

\end{lem}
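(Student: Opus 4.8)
The plan is to argue entirely inside the functional model recalled above, in which the unitary map $\mathcal{F}$ identifies $L_{2}(\mathbf{X},\mu)$ with the direct integral $A=\int_{0}^{\infty}A(\lambda)\,dm(\lambda)$ and sends $\mathcal{L}^{k/2}$ to multiplication by $\lambda^{k/2}$. Writing $a=\mathcal{F}f$, so that $\|f\|=\|a\|_{A}$ and $\|\mathcal{L}^{k/2}f\|=\|\lambda^{k/2}a\|_{A}$, I would prove the two inclusions separately: the passage from $\mathbf{E}_{\omega}(\mathcal{L})$ to the Bernstein class is a one-line truncation-of-support estimate, while the reverse inclusion rests on the principle that polynomial-in-$\lambda$ control of all moments of $a$ forces $a$ to be supported in $[0,\omega]$.

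For the forward inclusion, suppose $f\in\mathbf{E}_{\omega}(\mathcal{L})$, so $a(\lambda)=0$ for $\lambda>\omega$. I would first record that $f\in\mathcal{D}^{\infty}(\mathcal{L})$: for every $k$ the integral $\int_{0}^{\infty}\lambda^{2k}\|a(\lambda)\|_{A(\lambda)}^{2}\,dm(\lambda)=\int_{0}^{\omega}\lambda^{2k}\|a(\lambda)\|_{A(\lambda)}^{2}\,dm(\lambda)\le\omega^{2k}\|a\|_{A}^{2}$ is finite, so $a\in A_{k}$ and $f\in\mathcal{D}(\mathcal{L}^{k})$. The same truncation applied to the multiplier $\lambda^{k/2}$ gives
\[
\big\|\mathcal{L}^{k/2}f\big\|^{2}=\int_{0}^{\omega}\lambda^{k}\|a(\lambda)\|_{A(\lambda)}^{2}\,dm(\lambda)\le\omega^{k}\int_{0}^{\omega}\|a(\lambda)\|_{A(\lambda)}^{2}\,dm(\lambda)=\omega^{k}\|f\|^{2},
\]
which is the Bernstein inequality (\ref{Bern}).

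For the reverse inclusion, assume $f\in\mathcal{D}^{\infty}(\mathcal{L})$ satisfies (\ref{Bern}) for all $k\in\mathbb{N}$; since $\mathcal{D}^{\infty}(\mathcal{L})$ lies in the domain of every $\mathcal{L}^{k/2}$, all the quantities are defined and equal $\int_{0}^{\infty}\lambda^{k}\|a(\lambda)\|_{A(\lambda)}^{2}\,dm(\lambda)$ in the model. I must show that $a$ is supported in $[0,\omega]$, which is exactly $f\in\mathbf{E}_{\omega}(\mathcal{L})$. Arguing by contradiction, if the support is not contained in $[0,\omega]$ then there exist $\delta>0$ and a set $S\subseteq\{\lambda\ge(1+\delta)\omega\}$ with $c_{0}:=\int_{S}\|a(\lambda)\|_{A(\lambda)}^{2}\,dm(\lambda)>0$. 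Restricting the moment integral to $S$ yields the lower bound $\|\mathcal{L}^{k/2}f\|^{2}\ge\big((1+\delta)\omega\big)^{k}c_{0}$, while (\ref{Bern}) supplies the upper bound $\omega^{k}\|f\|^{2}$; combining the two and cancelling $\omega^{k}$ leaves $(1+\delta)^{k}c_{0}\le\|f\|^{2}$ for every $k$, which is impossible as $k\to\infty$ since $c_{0}>0$. Hence $a$ vanishes a.e.\ on $\{\lambda>\omega\}$ and $f\in\mathbf{E}_{\omega}(\mathcal{L})$.

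The routine half is the forward estimate; the heart of the matter is the reverse direction. The main obstacle I anticipate is the measure-theoretic bookkeeping of the contradiction argument inside the abstract direct-integral picture: pinning down a positive-measure ``overshoot'' set $S$ bounded away from $\omega$ (via an exhaustion by the sets $\{\lambda\ge(1+\tfrac1m)\omega\}$ together with monotone convergence) and verifying that the half-integer powers $\mathcal{L}^{k/2}$ are faithfully represented by the multipliers $\lambda^{k/2}$ on $\mathcal{D}^{\infty}(\mathcal{L})$, so that the geometric factor $(1+\delta)^{k}$ genuinely defeats the Bernstein bound.
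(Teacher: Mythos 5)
The paper never proves this lemma --- it is stated as a fact imported from \cite{Pes88}--\cite{Pes01} --- so there is no in-paper argument to compare against. Your direct-integral proof is the standard one for this characterization, and both halves are sound: the truncation estimate for the forward inclusion, and, for the reverse inclusion, the exhaustion of $\{\lambda>\omega\}$ by the sets $\{\lambda\ge(1+\tfrac1m)\omega\}$ to extract a positive-mass set bounded away from $\omega$, followed by the geometric-growth contradiction. Your closing worry about whether $\mathcal{L}^{k/2}$ is represented by the multiplier $\lambda^{k/2}$ is the right one, and it is harmless: the direct-integral model is a form of the spectral theorem, so every Borel function of $\mathcal{L}$, in particular $\lambda\mapsto\lambda^{k/2}$, acts as the corresponding multiplier, and $\mathcal{D}^{\infty}(\mathcal{L})\subset\mathcal{D}(\mathcal{L}^{k/2})$ for every $k$.

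One discrepancy deserves explicit mention, though it originates in the paper rather than in your argument. What you actually prove and use throughout is $\left\|\mathcal{L}^{k/2}f\right\|^{2}\le\omega^{k}\|f\|^{2}$, i.e. $\left\|\mathcal{L}^{k/2}f\right\|\le\omega^{k/2}\|f\|$: this is what your forward computation yields, and it is also the bound your contradiction argument consumes when you say that (\ref{Bern}) ``supplies the upper bound $\omega^{k}\|f\|^{2}$.'' The inequality as printed in the lemma, $\left\|\mathcal{L}^{k/2}f\right\|\le\omega^{k}\|f\|$, is a different statement and cannot characterize $\mathbf{E}_{\omega}(\mathcal{L})$: whenever the spectrum meets $(\omega,\omega^{2}]$, an eigenfunction $\psi$ with eigenvalue $\lambda_{j}\in(\omega,\omega^{2}]$ satisfies $\left\|\mathcal{L}^{k/2}\psi\right\|=\lambda_{j}^{k/2}\|\psi\|\le\omega^{k}\|\psi\|$ for all $k$ yet lies outside $\mathbf{E}_{\omega}(\mathcal{L})$; the class defined by the printed inequality is $\mathbf{E}_{\omega^{2}}(\mathcal{L})$. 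The $\omega^{k/2}$ normalization you use is the one the paper itself relies on later: the proof of Theorem \ref{lower-estimate} applies Bernstein with $k=1$ in the form $\left\|\mathcal{L}^{1/2}f\right\|^{2}\le\omega\|f\|^{2}$, producing the factor $(\rho\omega^{1/2})^{2}$. So the exponent in (\ref{Bern}) is a typo, your proof establishes the intended statement, and the only correction to make is to state explicitly which normalization you are proving instead of citing (\ref{Bern}) for both forms interchangeably.
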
\section{The upper estimate}

First, we describe what will be called a metric lattice. 
\begin{lem}
There exists a constant $N=N(\mathbf{X})$ and for every sufficiently small $\rho>0$ there exists a set of points $\mathcal{X}_{\rho}=\{x_{j}\},\>x_{j}\in {\bf \mathbf{X}}$ such that
\begin{enumerate}

	\item  Balls $B(x_{j}, \rho/2)$ are disjoint;
	
		\item $\left\{B(x_{j}, \rho)\right\}$ is a cover of $\mathbf{X}$ and its  multiplicity  is not greater than $\leq N$.

\end{enumerate}
\end{lem}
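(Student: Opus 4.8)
The plan is to realize $\mathcal{X}_{\rho}$ as a maximal $\rho$-separated subset of $\mathbf{X}$ and then to read off the multiplicity bound from the doubling condition (Assumption A, inequality (\ref{doubling})).

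First I would produce the points. Consider the family of all subsets $S\subset\mathbf{X}$ whose points are pairwise at distance at least $\rho$, ordered by inclusion; this family satisfies the hypotheses of Zorn's lemma, so a maximal such set $\mathcal{X}_{\rho}=\{x_{j}\}$ exists. Since $\mathbf{X}$ is compact, $\mathcal{X}_{\rho}$ is automatically finite: a finite cover of $\mathbf{X}$ by balls of radius $\rho/2$ can contain at most one point of a $\rho$-separated set in each of its balls, because any two points of a ball of radius $\rho/2$ lie at distance strictly less than $\rho$. Property (1) is then immediate, for if $i\neq j$ then $d(x_{i},x_{j})\geq\rho$, so the open balls $B(x_{i},\rho/2)$ and $B(x_{j},\rho/2)$ are disjoint. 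Property (2), the covering, follows from maximality: if some $y\in\mathbf{X}$ satisfied $d(y,x_{j})\geq\rho$ for every $j$, then $\mathcal{X}_{\rho}\cup\{y\}$ would still be $\rho$-separated, contradicting maximality; hence every $y$ lies in some $B(x_{j},\rho)$.

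It remains to bound the multiplicity, which is the only place the doubling condition is used. Fix $y\in\mathbf{X}$ and let $x_{j_{1}},\dots,x_{j_{m}}$ be the centers with $y\in B(x_{j_{\ell}},\rho)$, i.e. $d(y,x_{j_{\ell}})<\rho$. On one hand, the disjoint balls $B(x_{j_{\ell}},\rho/2)$ all lie inside $B(y,3\rho/2)$, so
$$
\sum_{\ell=1}^{m}\left|B(x_{j_{\ell}},\rho/2)\right|\leq\left|B(y,3\rho/2)\right|.
$$
On the other hand, $d(y,x_{j_{\ell}})<\rho$ gives $B(y,3\rho/2)\subseteq B(x_{j_{\ell}},5\rho/2)$, and iterating (\ref{doubling}) three times to pass from radius $\rho/2$ to radius $4\rho\geq 5\rho/2$ yields $|B(y,3\rho/2)|\leq|B(x_{j_{\ell}},5\rho/2)|\leq 2^{3D}\,|B(x_{j_{\ell}},\rho/2)|$. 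Substituting the resulting lower bound $|B(x_{j_{\ell}},\rho/2)|\geq 2^{-3D}|B(y,3\rho/2)|$ into the sum gives $m\,2^{-3D}|B(y,3\rho/2)|\leq|B(y,3\rho/2)|$, whence $m\leq 2^{3D}$. Thus $N:=2^{3D}$ depends only on the doubling constant of $\mathbf{X}$, establishing the multiplicity bound in (2).

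The one genuine obstacle is this volume-counting step: one has to pick the right chain of ball inclusions so that a single disjointness estimate, together with a fixed number of applications of the doubling inequality, produces a bound independent of both $\rho$ and $y$. Everything else—existence and covering via maximality, finiteness via compactness—is routine.
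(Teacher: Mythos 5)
Your proof is correct and follows essentially the same route as the paper: a maximal ($\rho$-separated) selection of centers gives disjointness and, by maximality, the covering property, while the multiplicity bound comes from packing the disjoint balls $B(x_{j_\ell},\rho/2)$ into a slightly larger ball and comparing volumes via the doubling inequality. Your chain of inclusions is in fact a bit cleaner than the paper's (yielding $N=2^{3D}$ rather than $80^{D}$), but the underlying idea is identical.
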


\begin{proof}
Let us choose a family of disjoint balls $B(x_{i},\rho/2)$ such
that there is no ball $B(x,\rho/2), x\in M,$ which has empty
intersections with all balls from our family. Then the family
$B(x_{i},\rho)$ is a cover of $M$. Every ball from the family
$\{B(x_{i}, \rho)\}$, that has non-empty intersection with a
particular ball $B(x_{j}, \rho)$ is contained in the ball
$B(x_{j}, 2\rho)$. Since any two balls from the family
$\{B(x_{i},\rho/2)\}$ are disjoint, it gives the following
estimate for the index of multiplicity $N$ of the cover
$\{B(x_{i},\rho)\}$:
\begin{equation}
N\leq \sup_{y\in \mathbf{X}}\frac{|B(y, 2\rho)|}{\inf_{x\in B(y, 2\rho)|}|B(x, \rho/2)|}.
\end{equation}
Note, that the doubling inequality 
$$
0<\left|B(x, 2r)\right|\leq 2^{D}\left|B(x, r)\right|
$$
implies 
 the next two inequalities 
\begin{equation}\label{1a}
\left|B(x, \rho r)\right|\leq (2\rho)^{D}|B(x, r)|,\>\>\>x\in \mathbf{X}, \>\>r>0, \>\>\rho>1, 
\end{equation}
and
\begin{equation}\label{1b}
\left|B(x, r)\right|\leq 2^{D}\left(1+d(x,y)/r\right)^{D}|B(y, r)|,\>\>\>x,y\in \mathbf{X}, \>\>r>0.
\end{equation}
Indeed, if $2^{k}\leq \rho<2^{k+1}$ then $2^{(k+1)D}\leq (2\rho)^{D}$ and 
$$
\left|B(x, \rho r)\right|\leq \left|B(x, 2^{k+1}r)\right|\leq 2^{(k+1)D}\left|B(x, r)\right|\leq (2\rho)^{D}\left|B(x, r)\right|.
$$
On the other hand, one has $B(x,r)\subset B\left(y, r+d(x,y)\right)=B\left(y, \left(1+d(x,y)/r\right)r\right)$ and it  implies (\ref{1b}).

Now, since $x\in B(y, 2\rho)$ we have $d(x,y)\leq 2\rho$ and according to (\ref{1b}) 
$$
|B(y, \rho/2)|\leq 2^{D}\left( 1+\frac{d(x,y)}{\rho/2}\right)^{D} |B(x, \rho/2)|\leq 10^{D} |B(x, \rho/2)|.
$$
In other words for any $y\in \mathbf{X}$ and every $x\in B(y, 2\rho)$  one has
\begin{equation}\label{reverse}
|B(x, \rho/2)|\geq 10^{-D}|B(y, \rho/2)|.
\end{equation}
This inequality along with (\ref{1a}) allows to continue estimation
of $N$:

$$N\leq  10^{D} 
\sup_{y\in \mathbf{X}}\frac{|B(y,2\rho)|}{|B(x,\rho/2)|} \leq
10^{D}\sup_{y\in \mathbf{X}}\frac{8^{D}|B(y,\rho/2)|}{|B(y, \rho/2)|}=80^{D}=N(\mathbf{X}).
$$

\end{proof}

\begin{defn}
Every set  that $\mathcal{X}_{\rho}=\{x_{j}\},\>x_{j}\in {\bf \mathbf{X}},\>\rho>0,$ that satisfies properties of the previous lemma will be called a  metric ($\rho$, N)-lattice.
\end{defn}

We are going to prove the following theorem.

\begin{thm} \label{lower-estimate}Assume that for a compact measure metric space ${\bf X}$ and operator $\mathcal{L}$ all the above {\bf Assumptions   A-D} are satisfied. 
Then 
\begin{enumerate}

\item Every space $ {\bf E}_{\omega}(\mathcal{L})$ is finite-dimensional.
\item The operator $\mathcal{L}$ has  a discrete spectrum $0\leq\lambda_{0}\leq \lambda_{1}\leq \lambda_{2},...$ which goes to infinity  without any accumulation points  and there exists a complete  family  $\{\psi_{j}\}$  of orthonormal eigenfunctions which form a  basis in $L_{2}(\mathbf{X}, \mu)$.
\item There exists constant 
		$0<\gamma=\gamma({\bf  X})<1$
		 such that for all sufficiently large $\omega$ the following inequality  holds
		\begin{equation}\label{Upper}
		\mathcal{N}_{\omega} \leq \> \inf \>card \left(\mathcal{X}_{\gamma\omega^{-1/2}}\right),
		\end{equation}
		where   $\mathcal{N}_{\omega}$ is a number   of eigenvalues of $\mathcal{L}$ (counted with multiplicities) which are not greater than $\omega>0$, and $\inf$ is taken over all $(\gamma\omega^{-1/2}, N)$-lattices.
		\end{enumerate}
\end{thm}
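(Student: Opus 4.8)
The plan is to deduce all three statements from a single \emph{sampling inequality} on the Paley--Wiener spaces, obtained by feeding the spectral (Bernstein) bound into the local Poincar\'e inequality summed over a metric lattice; the calibration $\rho=\gamma\omega^{-1/2}$ is exactly what makes the resulting oscillation term small. First I would fix a metric $(\rho,N)$-lattice $\mathcal{X}_\rho=\{x_j\}$ with $\rho=\gamma\omega^{-1/2}$ and take $f\in{\bf E}_\omega(\mathcal{L})$, which by the Bernstein lemma lies in $\mathcal{D}^\infty(\mathcal{L})$, so that Assumptions C and D apply. Writing $f_{B_j}$ for the average of $f$ over $B_j=B(x_j,\rho)$, I apply the local Poincar\'e inequality (\ref{Poinc-0}) on each $B_j$, sum over $j$, and use $\Gamma(f,f)\ge 0$ together with the multiplicity bound $\le N$ of the cover to collapse the right-hand side into one integral:
\begin{equation*}
\sum_j\int_{B_j}|f-f_{B_j}|^2\,d\mu \leq C\rho^2\sum_j\int_{B_j}\Gamma(f,f)\,d\mu \leq C N\rho^2\int_{\bf X}\Gamma(f,f)\,d\mu.
\end{equation*}
Assumption C bounds the last integral by $c\|\mathcal{L}^{1/2}f\|^2$, and since $\mathcal{F}f$ is supported in $[0,\omega]$ the functional model gives $\|\mathcal{L}^{1/2}f\|^{2}=\int_{0}^{\omega}\lambda\,\|a(\lambda)\|_{A(\lambda)}^{2}\,dm(\lambda)\le\omega\|f\|^{2}$. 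With $\rho^2\omega=\gamma^2$ independent of $\omega$, the whole right-hand side is $\le\theta\|f\|^2$ with $\theta=cNC\gamma^2$. Choosing $\gamma=\gamma({\bf X})$ small enough that $\theta<1/2$ (shrinking further if necessary so that $\gamma<1$), and using that $\{B_j\}$ covers ${\bf X}$ together with $|f|^2\le 2|f-f_{B_j}|^2+2|f_{B_j}|^2$, I obtain
\begin{equation*}
\|f\|^2\leq\sum_j\int_{B_j}|f|^2\,d\mu\leq 2\theta\|f\|^2+2\sum_j|B_j|\,|f_{B_j}|^2,
\end{equation*}
which rearranges to the sampling inequality $(1-2\theta)\|f\|^2\leq 2\sum_j|B_j|\,|f_{B_j}|^2$.

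Next I would record that the lattice is finite: since ${\bf X}$ is compact and $\mu$ is a finite Radon measure, iterating the doubling inequality (\ref{doubling}) yields a uniform lower bound $|B(x_j,\rho/2)|\ge c_\rho>0$, and as the balls $B(x_j,\rho/2)$ are disjoint their number is at most $\mu({\bf X})/c_\rho<\infty$. The sampling inequality then says precisely that the linear map $f\mapsto(f_{B_j})_j$ from ${\bf E}_\omega(\mathcal{L})$ into $\mathbb{R}^{\,\mathrm{card}(\mathcal{X}_\rho)}$ is injective, whence
\begin{equation*}
\dim{\bf E}_\omega(\mathcal{L})\leq\mathrm{card}(\mathcal{X}_\rho)=\mathrm{card}(\mathcal{X}_{\gamma\omega^{-1/2}}),
\end{equation*}
which proves (1).

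For (2) I would use that, by the functional model, ${\bf E}_\omega(\mathcal{L})$ is exactly the range of the spectral projection $\chi_{[0,\omega]}(\mathcal{L})$. Finite-dimensionality of every ${\bf E}_\omega$ means each such projection has finite rank, which excludes continuous spectrum and accumulation points in $[0,\omega]$; letting $\omega\to\infty$ gives a discrete spectrum $0\le\lambda_0\le\lambda_1\le\cdots\to\infty$ and, since $\bigcup_\omega{\bf E}_\omega$ is dense (as $\chi_{[0,\omega]}(\mathcal{L})\to I$ strongly), a complete orthonormal eigenbasis. In particular $\mathcal{N}_\omega=\mathrm{rank}\,\chi_{[0,\omega]}(\mathcal{L})=\dim{\bf E}_\omega(\mathcal{L})$, so (3) follows from the dimension bound above by taking the infimum over all $(\gamma\omega^{-1/2},N)$-lattices.

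The main obstacle I expect is the bookkeeping in the sampling step---specifically, forcing the constant $\theta=cNC\gamma^2$ below $1/2$ with a \emph{single} $\gamma=\gamma({\bf X})$ valid for all large $\omega$. This is what pins down the scaling $\rho=\gamma\omega^{-1/2}$ and relies crucially on the $\omega$-independence of the multiplicity constant $N$ (from the lattice lemma) and of the constants in Assumptions C and D, as well as on the nonnegativity of $\Gamma(f,f)$ needed to pass from the summed Poincar\'e inequality to a single integral. The subsequent functional-analytic passage from finite-rank spectral projections to a discrete spectrum and an eigenbasis is then standard.
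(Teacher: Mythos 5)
Your proof is correct and takes essentially the same route as the paper: sum the local Poincar\'e inequality over a $(\rho,N)$-lattice, collapse via the multiplicity bound and Assumption C, absorb the oscillation term using Bernstein with the calibration $\rho=\gamma\omega^{-1/2}$, and read off the dimension bound from the resulting sampling inequality --- indeed your inequality $(1-2\theta)\|f\|^{2}\le 2\sum_{j}|B_{j}|\,|f_{B_{j}}|^{2}$ is exactly the paper's $\|f\|^{2}\le C_{1}\sum_{j}|\langle f,\xi_{j}\rangle|^{2}$ specialized to $\alpha=1$, since $|\langle f,\xi_{j}\rangle|^{2}=|B_{j}|\,|f_{B_{j}}|^{2}$. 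Your passage from finite-rank spectral projections to discreteness of the spectrum and the identification $\mathcal{N}_{\omega}=\dim{\bf E}_{\omega}(\mathcal{L})$ is the same deduction the paper makes, only spelled out in more detail.
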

	
\begin{proof}
We note that  for all $\alpha>0$
$$
|A|^{2}\leq \left(|A-B|+|B|\right)^{2}\leq |A-B|^{2}+2|A-B||B|+|B|^{2},
$$
and
$$
2|A-B||B|\leq\alpha^{-1}|A-B|^{2}+\alpha|B|^{2},\>\>\>\>\alpha>0,
$$
which imply  the inequality 
\begin{equation}\label{in}
(1+\alpha)^{-1}|A|^{2}\leq\alpha^{-1}|A-B|^{2}+|B|^{2},\>\>\alpha>0.
\end{equation}

Let $\mathcal{X}_{\rho}=\{x_{j}\}$ be a $(\rho, N) $-lattice and 
$$ 
\zeta_{j}(x)=|B(x_{j}, \rho)|^{-1}\chi_{j}(x),\>\>\>\>
 \xi_{j}(x)=|B(x_{j}, \rho)|^{-1/2}\chi_{j}(x),
$$ 
where $\chi_{j}$ is characteristic function of $B(x_{j}, \rho)$.
 One has for any $f\in L_{2}(\bf{X},\mu)$ and any $\alpha>0$,
\begin{equation}\label{firstineq}
\|f\|^{2}\leq \sum_{j}\int_{B(x_{j}, \rho)}|f|^{2}\leq
$$
$$\leq (1+\alpha)\sum_{j}\int_{B(x_{j}, \rho)}|f-     \left<f, \zeta_{j}\right>|^{2}+\frac{1+\alpha}{\alpha}\sum_{j}| \left<f, \xi_{j}\right>|^{2}.
\end{equation}
According to Poincar\'e inequality (\ref{Poinc-0}) if $f$ belongs to $\mathcal{D}^{\infty}(\mathcal{L})$ then
$$
\sum_{j}\int_{B(x_{j},\rho) }|f-          \left<f, \zeta_{j}\right>    |^{2}\leq C\rho^{2}\sum_{j}\int_{B(x_{j},\rho)}\Gamma(f,f)(y)d\mu(y).
$$
Since cover by $B(x_{j}, \rho)$ has finite multiplicity $N$ it gives
$$
\sum_{j}\int_{B(x_{j},\rho) }|f-          \left<f, \zeta_{j}\right>    |^{2}\leq CN\rho^{2}\int_{{\bf X}}\Gamma(f,f)(y)d\mu(y)
$$
and thanks to the gradient property
$$
\int_{\mathbf{X}}\Gamma(f,f)(x)d\mu(x)\leq c\left\| \mathcal{L}^{1/2}f\right\|^{2}.
$$
 it implies
\begin{equation}\label{2ineq}
\sum_{j}\int_{B(x_{j},\rho) }|f-          \left<f, \zeta_{j}\right>    |^{2}\leq CcN\rho^{2}\left\| \mathcal{L}^{1/2}f\right\|^{2}.
\end{equation}
Thus combining (\ref{firstineq}) and (\ref{2ineq}) we obtain a kind of global Poincar\'e inequality:

\begin{equation}
\|f\|^{2}\leq \frac{1+\alpha}{\alpha}\sum_{j}|\left<f,\xi_{j}\right>|^{2}+(1+\alpha)CcN\rho^{2}\left\| \mathcal{L}^{1/2}f\right\|^{2}.
\end{equation}
If $f\in {\bf E}_{\omega}(\mathcal{L})$ then the Bernstein inequality gives 
$$
\|f\|^{2}\leq \frac{1+\alpha}{\alpha}\sum_{j}|\left<f,\xi_{j}\right>|^{2}+ (1+\alpha)CcN(\rho\omega^{1/2})^{2}\|f\|^{2}.
$$ 
If $\gamma<\left((1+\alpha)CcN\right)^{-1}$ then  by choosing $\rho$ which satisfies 
$$
\rho=\gamma\omega^{-1/2}
$$
we can move the second term on the right  to the left side to obtain for $f\in {\bf E}_{\omega}(\mathcal{L})$
$$
\|f\|^{2}\leq \frac{1}{1-\tau}\frac{1+\alpha}{\alpha}\sum_{j}| \left<f, \xi_{j}\right>|^{2}=C_{1}\sum_{j}| \left<f, \xi_{j}\right>|^{2},
$$
where
$$
\tau=(1+\alpha)CcN\gamma^{2}<1,\>\>\>\>C_{1}=\frac{1}{1-\tau}\frac{1+\alpha}{\alpha},\>\>\>\>C_{1}=C_{1}(C,c,N,\alpha     ).
$$
Since  the space ${\bf X}$  is compact  every lattice contains only a  finite number of points. The last inequality shows that every function in ${\bf E}_{\omega}(\mathcal{L})$ is determined by a finite number of functionals.
It gives the first two items of our statement.

Moreover, the  dimension $\mathcal{N}_{\omega}$ of the space $ {\bf E}_{\omega}(\mathcal{L})$  is exactly the same as a number   of eigenvalues of $\mathcal{L}$ (counted with multiplicities) which are not greater than $\omega>0$.         Since it is clear that  the  dimension $\mathcal{N}_{\omega}$ cannot be bigger than cardinality of a sampling set for $ {\bf E}_{\omega}(\mathcal{L})$,      we obtain the third item of our theorem. Theorem is proved.

\end{proof}

\section{The lower estimate}

We are going to make use of the heat kernel 
$
P_{t}(x,y)
$ 
which is associated with the heat semigroup $e^{-t\mathcal{L}}$ generated by the self-adjoint operator $\mathcal{L}$:
\begin{equation}\label{heatkernel}
e^{-t\mathcal{L}}f(x)=\int_{{\bf X}}P_{t}(x,y)f(y)\mu(y).
\end{equation}
We assume that in addition to all the previous {\bf Assumptions  A-D} 
the heat kernel satisfies short-time Gaussian estimates

\bigskip

{\bf Assumption E.}\label{Gauss} There exist positive constants $C_{1}, C_{2}, c_{1}, c_{2},$ such that

\textit{
\begin{equation}\label{Gauss}
 \frac{C_{1}e^{-c_{1}\frac{ \left(dist(x,y)\right)^{2}}{t}}}{\sqrt{\mu(B(x, \sqrt{t}))\mu (B(y, \sqrt{t}))}}\leq P_{t}(x,y)\leq  \frac{C_{2}e^{-c_{2}\frac{ \left(dist(x,y)\right)^{2}}{t}}}{\sqrt{\mu(B(x, \sqrt{t}))\mu (B(y, \sqrt{t}))}}
 \end{equation}
where $0<t<1$ and $\mu (B(y, \sqrt{t}))$ is the volume of the ball.}

\bigskip

\begin{remark}
The estimate (\ref{Gauss}) "knows" that in the case of a general metric measure space balls of the same radius may have different volumes.

\end{remark}

Our objective in this section is to prove the following statement.
		\begin{thm} \label{WWL-left}  Suppose that a compact metric measure space ${\bf X}$ and  a self-adjoint operator $\mathcal{L}$ in $L_{2}({\bf X})$  and all the {\bf Assumptions A-E} are satisfied.  There exists   a constant
		$$
		0<c=c({\bf X})<1
		$$
		 such that for all sufficiently large $\omega$ the number $\mathcal{N}_{\omega}(\mathcal{L})$ of eigenvalues of $\mathcal{L}$ in $[0,\>\omega]$ satisfies the following  inequality 
		\begin{equation}\label{Main-left}
	c\>\sup \>card \left(\mathcal{X}_{\omega^{-1/2}}\right)\leq \mathcal{N}_{\omega}(\mathcal{L}),
		\end{equation}
		where  $\sup$ is taken over all $(\omega^{-1/2}, N)$-lattices.
	\end{thm}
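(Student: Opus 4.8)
The plan is to bound $\mathcal{N}_{\omega}(\mathcal{L})$ from below through the heat trace. Since Theorem \ref{lower-estimate} already yields a discrete spectrum and the upper bound in (\ref{Gauss}) makes $e^{-t\mathcal{L}}$ trace-class, I would work with
\[
\theta(t):=\operatorname{tr}\left(e^{-t\mathcal{L}}\right)=\sum_{k}e^{-t\lambda_{k}}=\int_{\mathbf{X}}P_{t}(x,x)\,d\mu(x).
\]
Evaluating (\ref{Gauss}) on the diagonal $x=y$ gives $C_{1}/\mu(B(x,\sqrt t))\le P_{t}(x,x)\le C_{2}/\mu(B(x,\sqrt t))$ --- here the \emph{lower} bound of Assumption E is essential --- so $\theta(t)$ is comparable to $V(t):=\int_{\mathbf{X}}\mu(B(x,\sqrt t))^{-1}\,d\mu(x)$. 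A covering argument based on the doubling inequality (\ref{doubling}) and the ball-volume comparison (\ref{1b}) then shows, with constants depending only on $D$ and $N$, that $V(t)\asymp\operatorname{card}(\mathcal{X}_{\sqrt t})$ for any $(\sqrt t,N)$-lattice. In particular all such lattices have comparable cardinality, so it suffices to bound $\mathcal{N}_{\omega}(\mathcal{L})$ below by a fixed multiple of $\operatorname{card}(\mathcal{X}_{\omega^{-1/2}})$ for a single lattice, and the supremum in (\ref{Main-left}) will then cost only a bounded factor.

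Before the main step I would record two consequences of $\theta(t)\asymp V(t)\asymp\operatorname{card}(\mathcal{X}_{\sqrt t})$. First, since $e^{-t\lambda_{k}}\ge e^{-1}$ whenever $\lambda_{k}\le 1/t$, the trace upper bound yields a crude counting estimate $\mathcal{N}_{\Lambda}(\mathcal{L})\le e\,\theta(1/\Lambda)\le c_{0}V(1/\Lambda)$ for all large $\Lambda$ (so I will not even need the upper estimate of Theorem \ref{lower-estimate}). Second, the doubling inequality (\ref{1a}) controls how $V$ grows as the radius shrinks, giving the polynomial bound $V(s')\le 2^{D}(s/s')^{D/2}V(s)$ for $0<s'<s$.

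For the main step I would fix a large constant $a>1$, set $t=a\,\omega^{-1}$, and split the trace at $\omega$:
\[
\theta(t)=\sum_{\lambda_{k}\le\omega}e^{-t\lambda_{k}}+\sum_{\lambda_{k}>\omega}e^{-t\lambda_{k}}\le\mathcal{N}_{\omega}(\mathcal{L})+T,
\]
where $T:=\sum_{\lambda_{k}>\omega}e^{-t\lambda_{k}}$ and each term of the first sum is at most $1$. I would estimate $T$ by decomposing the range $\lambda_{k}>\omega$ into dyadic shells $2^{m}\omega<\lambda_{k}\le 2^{m+1}\omega$, $m\ge0$. On the $m$-th shell $e^{-t\lambda_{k}}\le e^{-a2^{m}}$, and the number of eigenvalues there is at most $\mathcal{N}_{2^{m+1}\omega}(\mathcal{L})\le c_{0}V((2^{m+1}\omega)^{-1})\le c_{0}2^{D}2^{(m+1)D/2}V(\omega^{-1})$ by the two recorded consequences, whence
\[
T\le c_{0}2^{D}\,V(\omega^{-1})\sum_{m\ge0}2^{(m+1)D/2}e^{-a2^{m}}.
\]
The series converges and tends to $0$ as $a\to\infty$, the super-exponential factor $e^{-a2^{m}}$ defeating the polynomial growth $2^{(m+1)D/2}$.

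Finally I would choose $a$. On one hand $\theta(t)\ge C_{1}'V(a\omega^{-1})\ge C_{1}'2^{-D}a^{-D/2}V(\omega^{-1})$ by the polynomial bound with $s=a\omega^{-1}$, $s'=\omega^{-1}$. On the other hand the estimate for $T$ decays like $e^{-a}$ as $a\to\infty$, whereas this lower bound for $\theta(t)$ decays only polynomially in $a$; hence there is an $a=a(\mathbf{X})$, depending only on $D$, $N$ and the constants in (\ref{Gauss}), for which $T\le\tfrac12\theta(t)$. For that $a$,
\[
\mathcal{N}_{\omega}(\mathcal{L})\ge\theta(t)-T\ge\tfrac12\theta(t)\ge c\,V(\omega^{-1})\ge c'\,\operatorname{card}\left(\mathcal{X}_{\omega^{-1/2}}\right),
\]
which is (\ref{Main-left}) with $c=c(\mathbf{X})$. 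I expect the tail estimate to be the main obstacle: one must tune the single time scale $a$ so that the eigenvalues above $\omega$ contribute only a fixed fraction of the whole trace, and this balancing relies on both the super-exponential decay of the heat factor and the polynomial volume growth furnished by doubling.
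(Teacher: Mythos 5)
Your proposal is correct, but it takes a genuinely different route from the paper. The paper's proof hinges on a pointwise estimate (Lemma \ref{key}, quoted from \cite{CKP}, Lemma 3.19), namely $a_{1}|B(x,s^{-1})|^{-1}\leq \sum_{l,\lambda_{l}\leq s}|\psi_{l}(x)|^{2}$, which is then integrated over ${\bf X}$ using exactly the covering computation you also perform (the $4^{D}$ volume comparison from doubling, the multiplicity $N$ of the cover, and orthonormality of the $\psi_{l}$, which converts the integral of the spectral function into $\mathcal{N}_{\omega}(\mathcal{L})$). You never leave the integrated level: you compare the heat trace $\theta(t)$ with $V(t)=\int_{\bf X}\mu(B(x,\sqrt t))^{-1}d\mu(x)$ via the diagonal Gaussian bounds, identify $V(t)$ with lattice cardinality by the covering argument, and dispose of the eigenvalues above $\omega$ by a dyadic-shell tail estimate in which the time scale $a$ is tuned so that the super-exponential decay $e^{-a2^{m}}$ beats the polynomial loss $a^{D/2}$ coming from doubling. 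That tail-tuning step is precisely the integrated analogue of the paper's Appendix proof of Lemma \ref{key}, where the decomposition $e^{-t\lambda^{2}}\leq \chi_{[0,s]}+\sum_{j\geq 0}\chi_{[0,2^{j+1}s]}(\lambda)e^{-t2^{2j}s^{2}}$ and the choice $s\sqrt t=2^{m}$ play the role of your shells and of your parameter $a=2^{2m}$. What each approach buys: the paper's pointwise lemma is a strictly stronger statement (local two-sided control of the spectral function, of independent interest), while your trace argument is more elementary and self-contained --- it needs neither the pointwise lower bound nor, as you note, the upper estimate of Theorem \ref{lower-estimate}, only the diagonal Gaussian bounds, doubling, and the lattice covering. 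In a final write-up you should make explicit the standard justification that $e^{-t\mathcal{L}}$ is trace class with $\operatorname{tr}\left(e^{-t\mathcal{L}}\right)=\int_{\bf X}P_{t}(x,x)\,d\mu(x)$ (via the Hilbert--Schmidt property of $e^{-(t/2)\mathcal{L}}$ and the semigroup identity), and note that all radii and times occurring in your estimates are small once $\omega$ is large, so that Assumption E (which is stated for $0<t<1$) and the small-radius form of the doubling consequences genuinely apply.
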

	
	\begin{proof}
	
The key ingredient in the proof  of Theorem \ref{WWL-left} is the following lemma which contains certain information about pointwise  growth of eigenfunctions $\psi_{l}$ in terms of the metric and measure.

\begin{lem}\label{key}
(CPK, Lemma 3.19)
Under {\bf Assumptions A-E} there exist   constants $ a_{1}=  a_{1}({\bf X}) >0,\>\>\> a_{2}=  a_{2}({\bf X}) >0$ such that for all sufficiently large $s>0$ 

\begin{equation}\label{double}
\frac{a_{1}}{|B(x, s^{-1})|}\leq \sum_{l,\>\lambda_{l}\leq s}|\psi_{l}(x)|^{2}\leq \frac{a_{2}}{|B(x, s^{-1})|}.
\end{equation}
\end{lem}
A proof of this lemma is given in Appendix \ref{proof}. We apply Lemma \ref{key}  when  $t =\omega$ to  obtain the following inequality for sufficiently large  $\omega$: 
\begin{equation}\label{ball}
\frac{1}{|B(x, \omega^{-1/2})|}\leq c_{1}\sum_{l,\>\lambda_{l}\leq \omega}|\psi_{l}(x)|^{2}.
\end{equation}
 One has
 $$
card\left(\mathcal{X}_{\omega^{-1/2}}\right)=\sum_{x_{j}\in \mathcal{X}_{\omega^{-1/2}}}\frac{|B(x_{j}, \omega^{-1/2})|}{|B(x_{j}, \omega^{-1/2})|}=
$$
$$
\sum_{x_{j}\in \mathcal{X}_{\omega^{-1/2}}}\frac{1}{|B(x_{j}, \omega^{-1/2})|}
\int_{B(x_{j}, \omega^{-1/2})}\mu(x).
$$
On the other hand, since for  every $x\in  B(x_{j}, \omega^{-1/2})$ one has $dist(x_{j}, x)\leq \omega^{-1/2}$  the inequality  
$$
\left|B(x, r)\right|\leq 2^{D}\left(1+d(x,y)/r\right)^{D}|B(y, r)|,\>\>\>x,y\in \mathbf{X}, \>\>r>0.
$$
 gives 
$$
|B(x, \omega^{-1/2})|\leq 4^{D}|B(x_{j}, \omega^{-1/2})|.
$$
Thus one has 
$$
\int_{B(x_{j}, \omega^{-1/2})}\frac{1}{|B(x_{j}, \omega^{-1/2})|}\mu(x)\leq 4^{D}\int_{B(x_{j}, \omega^{-1/2})}\frac{\mu(x)}{|B(x, \omega^{-1/2})|}.
$$
This inequality along with (\ref{ball}) shows  that for every sufficiently large $\omega>0$ and every $(\omega^{-1/2}, N)$-lattice $\mathcal{X}_{\omega^{-1/2}}$ the following inequalities hold true
$$
card\left(\mathcal{X}_{\omega^{-1/2}}\right)\leq 4^{D}\sum_{x_{j}\in \mathcal{X}_{\omega^{-1/2}}}\int_{B(x_{j}, \omega^{-1/2})}\frac{\mu(x)}{|B(x, \omega^{-1/2})|}\leq 
$$
$$
4^{D}N\int_{{\bf X}}\frac{\mu(x)}{|B(x, \omega^{-1/2})|}\leq 4^{D}c_{1}\int_{{\bf X}}\left(\sum_{l,\>\lambda_{l}\leq\omega}|\psi_{l}(x)|^{2}\right)\mu(x).
$$
Since
\begin{equation}\label{N}
\int_{{\bf X}}\left(\sum_{l,\>\lambda_{l}\leq \omega}|\psi_{l}(x)|^{2}\right)\mu(x)=\sum_{l,\>\lambda_{l}\leq  \omega}\int_{{\bf X}}|\psi_{l}(x)|^{2}\mu(x)=\mathcal{N}_{ \omega}(\mathcal{L}),
\end{equation}
we obtain  the inequality 
\begin{equation}\label{upper}
card\left(\mathcal{X}_{\omega^{-1/2}}\right)\leq c_{2}\mathcal{N}_{ \omega}(\mathcal{L}),
\end{equation}
for some $c_{2}=c_{2}\left(\mathbf{X}\right)$.
Thus
there exists a positive  $c=c({\bf X})< 1$ such that 
\begin{equation}\label{lower}
c\>\sup card\left(\mathcal{X}_{\omega^{-1/2}}\right)\leq \mathcal{N}_{\omega}(\mathcal{L}),
\end{equation}
where $\sup$ is taken over all  lattices   $\mathcal{X}_{\omega^{-1/2}}$.
Theorem \ref{WWL-left} is proven.

\end{proof}
To summarize we formulate next theorem.

		\begin{thm} \label{WWL}  Assume that for a compact measure metric space ${\bf X}$ and operator $\mathcal{L}$  all the above  {\bf assumptions  A-E}  are satisfied. Then 
		there are   constants $0<c=c({\bf X})<1$ and 
		$0<\gamma=\gamma({\bf  X})<1$
		 such that for all sufficiently large $\omega$ the following double inequality  holds
		$$
		c \>\sup card\left(\mathcal{X}_{\omega^{-1/2}}\right)
		\leq \mathcal{N}_{\omega}(\mathcal{L}) \leq \> \inf card\left(\mathcal{X}_{\gamma\omega^{-1/2}}\right),
		$$
		where $\>\sup\>$ and $\>\inf\>$ are taken over all $(\omega^{-1/2}, N)$-lattices and $(\gamma\omega^{-1/2}, N)$-lattices respectively.
	\end{thm}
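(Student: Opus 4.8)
The plan is to obtain this double inequality simply by assembling the two estimates already established, since Assumptions A--E together subsume the hypotheses of both Theorem \ref{lower-estimate} and Theorem \ref{WWL-left}. First I would invoke Theorem \ref{lower-estimate}, whose hypotheses are exactly Assumptions A--D. These are all assumed here, so its conclusions apply verbatim: item (2) guarantees that $\mathcal{L}$ has discrete spectrum accumulating only at infinity, so that the counting function $\mathcal{N}_{\omega}(\mathcal{L})$ is well defined and finite, and item (3) furnishes a constant $0<\gamma=\gamma({\bf X})<1$ together with a threshold $\omega_{1}$ such that, for all $\omega\geq \omega_{1}$,
$$
\mathcal{N}_{\omega}(\mathcal{L}) \leq \> \inf \>card\left(\mathcal{X}_{\gamma\omega^{-1/2}}\right),
$$
the infimum being over all $(\gamma\omega^{-1/2}, N)$-lattices. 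This is precisely the right-hand inequality of the theorem.

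Next I would invoke Theorem \ref{WWL-left}. Its hypotheses are Assumptions A--E, which are all in force; the only ingredient beyond Theorem \ref{lower-estimate} is the short-time Gaussian estimate of Assumption E, which enters through the eigenfunction growth bound of Lemma \ref{key}. Theorem \ref{WWL-left} then supplies a constant $0<c=c({\bf X})<1$ and a threshold $\omega_{2}$ such that, for all $\omega\geq \omega_{2}$,
$$
c\>\sup \>card\left(\mathcal{X}_{\omega^{-1/2}}\right) \leq \mathcal{N}_{\omega}(\mathcal{L}),
$$
with the supremum over all $(\omega^{-1/2}, N)$-lattices. This is the left-hand inequality.

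Finally I would reconcile the two ``for sufficiently large $\omega$'' clauses by setting $\omega_{0}=\max(\omega_{1},\omega_{2})$; then for every $\omega\geq \omega_{0}$ both inequalities hold simultaneously, and concatenating them yields the claimed double inequality, with the constants $c$ and $\gamma$ carried over unchanged from the two source theorems. I do not expect any genuine obstacle here beyond bookkeeping: the one point that warrants a line of care is that the two estimates are stated at the \emph{two different scales} $\omega^{-1/2}$ and $\gamma\omega^{-1/2}$ (and with a $\sup$ on the left but an $\inf$ on the right), so the summary must keep these distinct rather than collapsing them. The theorem asserts only that $\mathcal{N}_{\omega}(\mathcal{L})$ is sandwiched between the two cardinalities, not that they are equal.
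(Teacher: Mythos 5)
Your proposal is correct and is exactly the paper's own route: the paper introduces Theorem \ref{WWL} with the phrase ``To summarize we formulate next theorem'' and offers no separate argument, since the right-hand inequality is item (3) of Theorem \ref{lower-estimate} and the left-hand inequality is Theorem \ref{WWL-left}, both available under Assumptions A--E. Your added care about taking the maximum of the two thresholds and keeping the two scales $\omega^{-1/2}$ and $\gamma\omega^{-1/2}$ distinct is the right bookkeeping and matches the statement as given.
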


\section{Strongly local regular with a complete intrinsic metric Dirichlet spaces}\label{Dirichlet}
	
It is a natural question to ask if there exist conditions under which our  { \bf Assumptions A-E} hold true. It is a known fact \cite{HS, S3} that for the so-called \textit{strongly local regular with a complete intrinsic metric Dirichlet spaces} (see \cite{   A}, \cite{BH},  \cite{      FOT}, \cite{S1}-\cite{ S3})  the Poincar\'e inequality (\ref{Poinc-0})  holds if and only the Gaussian estimates (\ref{Gauss}) hold. In what follows we remind definition of such Dirichlet spaces.
	
Suppose that  we are given a  metric measure space ${\bf X}$ with doubling property (\ref{doubling}),
	and a positive densely defined symmetric operator $\mathcal{L}$ in the corresponding space $L_{2}({\bf X})$

 \subsection{Dirichlet spaces}
 
 We consider a symmetric non-negative form
	$$
	\mathcal{E}(f,g)=\langle\mathcal{L}f,g\rangle=\mathcal{E}(g,f),\>\>\>\>\>\>\mathcal{E}(f, f)=\langle\mathcal{L} f,f\rangle\geq 0,
	$$
	with domain $\mathcal{D}(\mathcal{E})=\mathcal{D}(\mathcal{L})$.
On $\mathcal{D}(\mathcal{E})$ we introduce  the pre-hilbertian closable structure defined as
$$
\|f\|^{2}_{\mathcal{E}}=\|f\|^{2}+\mathcal{E}(f,f)
$$
which has a closure $\overline {\mathcal{E}}$   in  $L_{2}(\mathbf{X}, \mu)$ with domain $\mathcal{D}(\overline{\mathcal{E}})$ .  Now one can construct a self-adjoint extension $\overline{\mathcal{L}}$ of the operator $\mathcal{L}$ whose domain consists of all $f\in \mathcal{D}(\overline{\mathcal{E}})$ for which there exists $v\in L_{2}(\mathbf{X}, \mu)$ such that $\overline{\mathcal{E}}(f,g)=\left<v, g\right>$ for all $g\in \mathcal(\overline{\mathcal{E}})$ and by definition $\overline{\mathcal{L}}f=v$. The operator  $\overline{\mathcal{L}}$ is non-negative, self-adjoint, and 
$$
\mathcal{D}(\overline{\mathcal{E}})=\mathcal{D}\left((\overline{\mathcal{L}})^{1/2}\right),\>\>\>\>\>\>\>\overline{\mathcal{E}}(f,g)=\left<(\overline{\mathcal{L}})^{1/2}f, (\overline{\mathcal{L}})^{1/2}g\right>.
$$
Using the regular spectral theory of positive self-adjoint operators, one can associate with $\mathcal{L}$  a self-adjoint strongly continuous contraction semigroup in $L_{2}(\mathbf{X})$
$$
T(t)=e^{-t\mathcal{L}}=\int_{0}^{\infty}e^{-\sigma t}dE_{\sigma},
$$
where $E_{\sigma}$ is the spectral resolution associated with  $\overline{\mathcal{L}}$.

Our next assumption is that $T(t)$ is a submarkovian semigroup which means that if $0\leq f\leq 1$  and $f\in L_{2}(\mathbf{X})$ then $0\leq T(t)f\leq 1$.

If the above conditions  are satisfied, one says that  the pair $(\mathcal{D}(\overline{\mathcal{E}}), \overline{\mathcal{E}})$ is a \textit{Dirichlet space}.

\subsection {Definition of strongly local regular with a complete intrinsic metric Dirichlet spaces}.

The form $\overline{\mathcal{E}}$ is \textit{strongly local} if 
 $\overline{\mathcal{E}}(f,g)=0,\>\>f, g\in \mathcal{D}(\overline{\mathcal{E}})$  for every $f$ with compact support and $g$ constant on a neighborhood of the support of $f$. 
 The form $\overline{\mathcal{E}}$ is \textit  {regular} if 
the space $C_{0}(\mathbf{X})$ of
continuous functions on $\mathbf{X}$  with compact support has the property that the algebra
  $C_{0}(\mathbf{X}) \cap  \mathcal{D}(\overline{\mathcal{E}}) $ is dense in   $C_{0}(\mathbf{X})$  with respect to the $\sup$  norm, and dense in $\mathcal{D}(\overline{\mathcal{E}}) $  
in the norm 

  $$
  \sqrt{\overline{\mathcal{E}}(f,f)+\|f\|^{2}_{2}}.
  $$
Under the above assumptions  there exists a bilinear symmetric form $d\Gamma$  defined
on $\mathcal{D}(\overline{\mathcal{E}})\times \mathcal{D}(\overline{\mathcal{E}})$  with values in the signed Radon measures on $\mathbf{X}$ such that for $f,g,h\in C_{0}(\mathbf{X})\cap \mathcal{D}(\overline{\mathcal{E}})$
$$
\mathcal{E}(hf,g)+\mathcal{E}(f,hg)-\mathcal{E}(h,fg)=2\int_{\mathbf{X}}h\>d\Gamma(f,g).
$$

In particular, 
$$
\overline{\mathcal{E}}(f,g)=\int_{\mathbf{X}}d\Gamma(f,g),\>\>\>\>d\Gamma(f,f)\geq 0.
$$
If one assums
that $ \mathcal{D}(\mathcal{L})$ is a subalgebra of $C_{0}(\mathbf{X})$,
then $d\Gamma$ is absolutely continuous with respect to original measure $\mu$, i.e. 
$$
d\Gamma(f,g)=\Gamma(f,g)d\mu,
$$
where
$$
\Gamma(f,g)=\frac{1}{2}\left( \mathcal{L}(fg)-f\mathcal{L}g-g\mathcal{L}f\right), \>\>\>f,g\in \mathcal{D}(\mathcal{L}).
$$

To summarize, there exists a bilinear function $\Gamma$ which maps 
$\mathcal{D}(\overline{\mathcal{E}})\times \mathcal{D}(\overline{\mathcal{E}})$ to $L_{1}(\mathbf{X})$  such that 
$\Gamma(f,f)(x)\geq 0$,

$$
\overline{\mathcal{E}}(hf,g)+\overline{\mathcal{E}}(f,hg)-\overline{\mathcal{E}}(h,fg)=2\int_{\mathbf{X}}h\>d\Gamma(f,g),
$$
where $f,g,h \in  \mathcal{D}(\overline{\mathcal{E}}) \cap L_{\infty}(\mathbf{X})$ and

$$
\overline{\mathcal{E}}(f,g)=2\int_{\mathbf{X}} \Gamma(f,g)(x)d\mu(x).
$$
The \textit{intrinsic distance} $\rho(x,y)$  on $\mathbf{X}$ is defined as 
$$
\rho(x,y)=\sup \{ |\phi(x)-\phi(y)| \},
$$
where $\sup$ is taken over all $\phi \in C_{0}(\mathcal{X})\cap \mathcal{D}(\overline{\mathcal{E}})$ for which the "gradient" $\Gamma(\phi, \phi)(x)$ is not greater than one.
Our next important assumption is that 
 \textit{topology generated by the intrinsic distance $\rho(x,y)$  is the same as the original topology of $\mathbf{X}$ and the metric space $(\mathbf{X},\rho)$ is complete.}

The following important fact can be found in   \cite{HS, S3}: for a Dirichlet space with
  the above formulated assumptions  
   \textit{existence of a local Poincar\'e inequality is equivalent to existence of  short time Gaussian estimates for the corresponding heat kernel.}

\section{Examples}\label{examples}

\subsection{Compact Riemannian manifolds}

One can easily check that  in the case of $\mathbb{R}^{n}$ or torus $\mathbb{T}^{n}$ if $\mathcal{L}$ is the regular Laplace operator $ \>\Delta =\sum_{j}\frac{\partial^{2} }{\partial x_{j}^{2}}\>$ then for smooth $\>f$
$$
\Gamma(f,f)=|\nabla f|^{2}=\sum_{j} \left |\frac{\partial f}{\partial x_{j}}\right|^{2}
$$
and the local Poincare inequality takes the form
$$
\int_{B(x,\rho)}|f(y)-f_{B}|^{2}d\mu(y)\leq C\rho^{2}\int_{B(x,\rho)}|\nabla f(y)|^{2}d\mu(y).
$$
In these cases  the equivalence
$$
\left\| \mathcal{L}^{1/2}f\right\|^{2}\sim \int_{\mathbf{X}}|\nabla f(x)|^{2}d\mu(x).
$$
holds true.

A similar  Poincare inequality holds on a compact Riemannian manifold ${\bf M}$ if $\mathcal{L}$ is the corresponding Laplace-Beltrami operator. 
All the  properties formulated above  are known to hold  on compact Riemannian manifolds.
The inequality  (\ref{grad}) (and even equivalence)  is known for compact Riemannian manifolds.
Moreover, it is  known that in the case of a compact Riemannian manifold of dimension $d$ the estimate (\ref{Gauss}) holds and takes the following form (due to the fact that in Riemannian situation  all the  balls of radius $\sqrt{t}$ have essentially the same volume $t^{d/2},\>\>d=dim\>{\bf M}$)
\begin{equation}\label{heatkern}
 C_{1} t^{-d/2}e^{-c_{1}\frac{ \left(dist(x,y)\right)^{2}}{t}}\leq P_{t}(x,y)\leq 
 C_{2} t^{-d/2}e^{-c_{2}\frac{ \left(dist(x,y)\right)^{2}}{t}}
\end{equation}
where $0<t<1$ and every constant depends on ${\bf M}$.

\subsection{Compact homogeneous sub-Riemannian manifolds}

Another interesting set of examples is given by compact homogeneous sub-Riemannian manifolds.

Let     $\mathbf{M}=\mathbf{G}/\mathbf{H}$ where $ \mathbf{G, H}$ are a compact Lie groups be a compact homogeneous manifold. Let ${\bf X}=\{X_{1},\ ...,X_{d}\}$ be a basis of the Lie algebra $\mathbf{g}$ of the group $\mathbf{G}$ . Let 
\begin{equation}\label{hvf}
{\bf Y}=\{Y_{1},...,Y_{m}\}
\end{equation}
 be a subset of ${\bf X}=\{X_{1},\ ...,X_{d}\}$ such that $Y_{1},...,Y_{m}$ and all their commutators 
\begin{equation}\label{com}
Y_{j,k}=[Y_{j}, \>Y_{k}]=Y_{j}Y_{k}-Y_{k}Y_{j},\>\>\>
$$
$$
Y_{j_{1},...,j_{n}}=[Y_{j_{1}},
[....[Y_{j_{n-1}}, Y_{j_{n}}]...]],
\end{equation}
 of order $n\leq \mathcal{E}$ span the entire algebra $\mathbf{g}$.

 Let 
\begin{equation}\label{vf}
Z_{1}=Y_{1},  Z_{2}=Y_{2},  ... , Z_{m}=Y_{m}, \>\>\>... \>\>\>, Z_{N},
\end{equation}
be an enumeration of all commutators (\ref{com}) up to order $n\leq \mathcal{E}$. If a $Z_{j}$ corresponds to a commutator of length $n$ we say that $deg(Z_{j})=n$.
Images of vector fields (\ref{vf}) under the natural projection $p: \mathbf{G}\rightarrow \mathbf{M}=\mathbf{G}/\mathbf{H}$ span the tangent space to $\mathbf{M}$ at every point and will be denoted by the same letters. 
\begin{defn}
A sub-Riemann structure on $\mathbf{M}=\mathbf{G}/\mathbf{H}$ is defined as a set of vectors fields on $\mathbf{M}$ which are images of the vector fields (\ref{hvf}) under the projection $p$. They can also be identified with differential operators in $L_{p}({\bf M}),\>1\leq p<\infty,$ under the quasi-regular representation of ${\bf G}$.
\end{defn}
One can define  a non-isotropic metric $\pi$ on ${\bf M}$ associated with the fields $\{Y_{1},...,Y_{m}\}$ (see \cite{NSW}).
\begin{defn}
The following inequalities describe relations of this metric with an $\mathbf{G}$-invariant Riemannian metric $dist$ on ${\bf M}=\mathbf{G}/\mathbf{H}$:
$$
a \>dist(x,y)\leq \pi(x,y)\leq b\left(dist(x,y)\right)^{1/\mathcal{E}}
$$
for some positive $a=a(Y_{1},...,Y_{m}),\> b=b(Y_{1},...,Y_{m})$.
Suppose that  $C(\epsilon)$ denotes the class of absolutely continuous mappings $\varphi: [0,1]\rightarrow {\bf M}$ which almost everywhere satisfy the differential equation 
$$
\varphi^{'}(t)=\sum_{j=1}^{m}h_{j}(t)Z_{j}(\varphi(t)),
$$
where $|h_{j}(t)|<\epsilon^{deg(Z_{j})}$. Then we define 
$
\pi(x,y)$ as the lower bound of all such $\epsilon>0$ for which there exists $\varphi \in C(\epsilon)$ with $\varphi(0)=x,\> \varphi(1)=y$.
\end{defn}
The corresponding family of balls in ${\bf M}$ is given by 
$$
B^{\pi}(x,\epsilon)=\{y\in {\bf M} : \ \pi(x,y)<\epsilon\}.
$$
These balls reflect the non-isotropic nature of the vector fields $Y_{1},...,Y_{m}$ and their commutators.  For a small $\epsilon>0$  ball $B^{\pi}(x,\epsilon)$ is of size $\epsilon$ in the directions $Y_{1},...,Y_{m}$, but only of size $\epsilon^{n}$  in the directions of commutators of length $n$. As a result, \textit{balls of the same radius may have different volumes}. 
 Note that in the case of a Riemannian manifold of dimension $n$ all the balls $B(x,\epsilon)$ of the same radius have essentially the same volume of order $\epsilon^{n}$.
 
We will be interested in  the following sub-elliptic operator (sub-Laplacian)
\begin{equation}\label{sub-L}
-\mathcal{L}=Y_{1}^{2}+...+Y_{m}^{2}
\end{equation}
which is hypoelliptic  \cite{Hor}, self-adjoint and non-negative in $L_{2}(\bf {M})$.
One can easily check that the corresponding gradient which was introduces above is given by the formula
$$
|\Gamma(f,f)|=|Y_{1}f|^{2}+...+|Y_{m}f|^{2}.
$$
In this setting the following Poincare inequality  is known: there exists a $C>0$ such that for any $
B^{\pi}(x,\epsilon)$ and any $f\in C^{\infty}\left(  \overline{ B^{\pi}(x,\epsilon) }   \right)$
\begin{equation}\label{Poinc}
\int_{B^{\pi}(x,\epsilon) }|f-f_{B^{\pi}(x,\epsilon) }|^{2}\leq C\epsilon^{2}\int_{B^{\pi}(x,\epsilon) }\left|\Gamma(f,f)\right|^{2},
\end{equation}
where
$$
f_{B^{\pi}(x,\epsilon) }=|B^{\pi}(x,\epsilon)|^{-1}\int_{B^{\pi}(x,\epsilon) }f.
$$
The Gaussian estimates for the corresponding heat kernel are also known \cite{M}.

\subsection{Sphere $S^{2}$ with a sub-Riemannian metric. A sub-Laplacian and sub-elliptic spaces on $S^{2}$}

To illustrate nature of sub-Riemannian manifolds let's consider the case of the two-dimensional sphere ${\bf S}^{2}$ in the space$(x_{1}, x_{2}, x_{3})$.  We consider on ${\bf S}^{2}$ two vector fields $Y_{1}=X_{2,3}$ and $Y_{2}=X_{1,3}$ where $X_{i,j}=x_{i}\partial_{j}-x_{j}\partial_{i}$.  
The corresponding sub-Laplace operator is
$$
-\mathcal{L}=Y_{1}^{2}+Y_{2}^{2},
$$
and the corresponding "gradient squared"   is
$$
\Gamma(f,f)=\left|Y_{1}f\right|^{2}+\left|Y_{2}f\right|^{2}.
$$
All the previous assumptions are satisfied in this case. Note that since  the operators $Y_{1},\> Y_{2}$  do not span the tangent space to ${\bf S}^{2}$ along a great circle with $x_{3}=0$ the operator $\mathcal{L}$ is not elliptic on $\mathbf{S}^{2}$. Since $Y_{1},\>Y_{2}, $ and their commutator $Y_{3}=Y_{1}Y_{2}-Y_{2}Y_{1}=X_{1, 2}$ span the tangent space at every point of ${\bf S}^{2}$, balls of a small radius $\epsilon$ with center on the circle $(x_{1}, x_{2}, 0)$ have volumes of order $\epsilon^{3}$ but balls with centers away from this circle have volumes of order $\epsilon^{2}$. 
Note \cite{Hor},  that the operator  $
-\mathcal{L}
$ is hypoelliptic.

\section{Appendix. Proof of Lemma \ref{key}}\label{proof}

In this proof we closely follow \cite{CKP}. 
Note, that the Gaussian estimates (\ref{Gauss})  clearly imply that for every $x\in {\bf X}$  and every $t>0$ the quantity $1/P_{t} (x,x)$ is comparable to the volume of the ball $B(x, t^{1/2})$. Namely,
\begin{equation}\label{double-diag}
\frac{C_{1}}{|B(x, t^{1/2})|}\leq P_{t}(x,x)\leq \frac{C_{2}}{|B(x, t^{1/2})|},\>\>\>|B(x, t^{1/2})|=\mu(B(x, t^{1/2})).
\end{equation}
For any measurable  bounded function $F(\lambda),\>\>\lambda\in [0, \>\infty)$ \textit{with compact support } and any $t>0$ one defines a bounded operator $F(t\sqrt{L})$ by the formula
\begin{equation}\label{func-calc-2}
F(t\sqrt{L})f(x)=\int_{\mathbf{X}}K^{F}_{t}(x,y)f(y)dy=\left<K^{F}_{t}(x,\cdot),f(\cdot)\right>,
\end{equation}
where $f\in L_{2}(\mathbf{X})$ and 
\begin{equation}\label{KERN}
K^{F}_{t}(x,y)=\sum_{l=0}^{\infty} F(t\sqrt{\lambda_l})\psi_l(x)\overline{\psi_l(y)} = K^{F}_t(y,x). 
\end{equation}
The function $K^{F}_t$ is known as the kernel of  the operator $F(t\sqrt{L})$.
Let us notice, that if  $0\leq F_{1}\leq F_{2}$ and both of them are bounded and have compact supports then $~K^{F_{1}}_{t}(x,x)\leq K^{F_{2}}_{t}(x,x)~$ for any $x\in {\bf X}$ and $t>0$. 
Indeed, in this case $F_{2}=F_{1}+H$, where $H$ is not negative. By (\ref{KERN}) we have 
\begin{equation}\label{kern-11}
K^{F_{2}}_{t}(x,x)=K^{F_{1}}_{t}(x,x)+K^{H}_{t}(x,x)
\end{equation}
where each term is non-negative. 
Next, we note that using the right-hand side of  (\ref{double-diag}), property (\ref{kern-11}), and the inequality
$
\chi_{[0,\>s]}(\lambda)\leq ee^{-s^{-2}\lambda^{2}}
$ 
we obtain
\begin{equation}\label{kern-2}
\sum_{l,\>\lambda_{l}\leq s}|\psi_{l}(x)|^{2}\leq   e\sum_{l,\>\lambda_{l}\leq s}  e^{-s^{-2}\lambda_{l}^{2}}|\psi_{l}(x)|^{2}\leq 
$$
$$
e\sum_{l\in \mathbb{N}}  e^{-s^{-2}\lambda_{l}^{2}}|\psi_{l}(x)|^{2}=eP_{s^{-2}}(x,x)\leq \frac{a_{2}}{|B(x, s^{-1})|},\>\>\>\>a_{2}=a_{2}({\bf X}).
\end{equation}
To prove the left-had side of   (\ref{double}) consider the inequality 
$$
e^{-t\lambda^{2}}=e^{-t\lambda^{2}}\chi_{[0, \>s]}+\sum_{j\geq 0}\chi_{[2^{j}s,\>2^{j+1}s]}(\lambda)e^{-t\lambda^{2}}\leq 
$$
$$
\chi_{[0, \>s]}+\sum_{j\geq 0}\chi_{[0,\>2^{j+1}s]}(\lambda)e^{ -t2^{2j}s^{2}  },
$$
which implies 
\begin{equation}\label{b-h-k}
 P_{t}(x,x)\leq K^{\chi_{[0,  s]}}_{1}(x,x)+\sum_{j>0}e^{ -t2^{2j}s^{2}  }K^{j}_{1}(x,x),
\end{equation}
where  
$$
K^{\chi_{[0,  s]}}_{1}(x, x)=\sum_{l,\>\lambda_{l}\leq s}|\psi_{l}(x)|^{2},
$$
 $K^{\chi_{[0,  s]}}_{1}(x, y)$ being the kernel of the operator $\chi_{[0,  s]}\left(\sqrt{\mathcal{L}}\right)$ and 
 $$
 K^{j}_{1}(x,x)=\sum_{l,\>\lambda_{l}\leq 2^{j+1}s}|\psi_{l}(x)|^{2},
 $$
  $ K^{j}_{1}(x, y)$       being the kernel of the operator $\chi_{[0,\>2^{j+1}s]}(\sqrt{\mathcal{L}})$.
In conjunction   with (\ref{Gauss}) it gives 

\begin{equation}\label{b-h-k-10}
c_{3}|B(x, t^{-1/2})|\leq P_{t}(x,x)\leq K^{\chi_{[0,  s]}}_{1}(x,x)+\sum_{j>0}e^{ -t2^{2j}s^{2}  }K^{j}_{1}(x,x)=
$$
$$
\sum_{l,\>\lambda_{l}\leq s}|\psi_{l}(x)|^{2}+\sum_{j>0}e^{ -2^{2j}ts^{2}  }\sum_{l,\>\lambda_{l}\leq 2^{j+1}s}|\psi_{l}(x)|^{2}.
\end{equation}
Note, that according to (\ref{doubling})  if $\rho>1$ and $\rho s^{-1}$ is sufficiently small then 
\begin{equation}\label{D-cond}
|B(x, \rho s^{-1}|\leq c\rho^{D}|B(x, s^{-1})|.
\end{equation}
Next, given $s\geq 1$ and $m\in \mathbb{N}$ we pick $t$ such that 
\begin{equation}\label{cond}
s\sqrt{t}=2^{m}.
\end{equation}
The  inequality (\ref{D-cond})  and the condition (\ref{cond}) imply 
\begin{equation}\label{D1}
\frac{(c2^{mD})^{-1}}{|B(x, s^{-1})|}\leq \frac{1}{|B(x, 2^{m}s^{-1})|}\leq c_{1}|B(x, t^{-1/2})|, \>\>\>m\in \mathbb{N}, 
\end{equation}
and
\begin{equation}\label{D2}
\frac{1}{|B(x, 2^{-m-1}s^{-1})|}\leq \frac{c2^{(m+1)D}}{|B(x, s^{-1})|}.
\end{equation}
Thus according to (\ref{D1}),           (\ref{kern-2}) and (\ref{b-h-k-10})                       we obtain that for a certain constant $c_{2}=c_{2}({\bf X})$

\begin{equation}\label{Eq}
\frac{(c2^{mD})^{-1}}  {|B(x, s^{-1})|}\leq c_{1}|B(x, t^{-1/2})|\leq
$$
$$
c_{2}\left( \sum_{l,\>\lambda_{l}\leq s}|\psi_{l}(x)|^{2}+  \sum_{j>0}e^{ -2^{2j}ts^{2}  }\sum_{l,\>\lambda_{l}\leq 2^{j+1}s}|\psi_{l}(x)|^{2}\right)\leq
$$
$$
c_{2}\left( \sum_{l,\>\lambda_{l}\leq s}|\psi_{l}(x)|^{2}+  \sum_{j>0}\frac{ e^{ -2^{2j}ts^{2}  }}{|B(x, 2^{-j-1}s^{-1})|}\right).
\end{equation}
Using  (\ref{Eq}), (\ref{cond}),  and (\ref{D2}) we obtain that for a certain constant $a=a({\bf X})$

$$
\frac{(c2^{mD})^{-1}}  {|B(x, s^{-1})|}\leq
a\left( \sum_{l,\>\lambda_{l}\leq s}|\psi_{l}(x)|^{2}+    \sum_{j>0}\frac {  e^{ -2^{2j}2^{2m } } 2^{(j+1)D} }{|B(x, s^{-1})|}\right)\leq
$$
$$
c_{2}\left( \sum_{l,\>\lambda_{l}\leq s}|\psi_{l}(x)|^{2}+   \frac{2^{D}}  {|B(x, s^{-1})|} \sum_{j>0}  e^{ -2^{2j}2^{2m } } 2^{jD} \right).
$$
Since
$$
 \frac{2^{D}}  {|B(x, s^{-1})|} \sum_{j>0}  e^{ -2^{2j}2^{2m } } 2^{jD}\leq  \frac{2^{D}2^{-mD}}  {|B(x, s^{-1})|} \sum_{j>0}  e^{ -2^{2j }2^{2m } } 2^{(j+m)D}\leq 
 $$
 $$
  \frac{2^{D}2^{-mD}}  {|B(x, s^{-1})|} \sum_{j>0}  e^{ -2^{2(j+m) } } 2^{(j+m)D}\leq   \frac{2^{D}2^{-mD}}  {|B(x,s^{-1})|} \sum_{j>m}  e^{ -2^{2j } } 2^{jD},
 $$
 one has that there are positive constants $c_{3},\>c_{4}$ such that for all  sufficiently large $s$ and $m\in \mathbb{N}$
$$
 \frac{2^{-mD}}  {|B(x, s^{-1})|} 
  \left( c_{3}-c_{4}2^{D}\sum_{j\geq m}  e^{ -2^{2j}2^{j D} } \right)\leq  \sum_{l,\>\lambda_{l}\leq s}|\psi_{l}(x)|^{2},
$$
where expression in parentheses is positive for sufficiently large $m\in \mathbb{N}$.
It proves  the left-had side of   (\ref{double}).
Lemma \ref{key} is proven.

	\end{document}